\theoremstyle{definition}
\newtheorem{definition}{Definition}[section]
\newtheorem*{definition*}{Definition}
\theoremstyle{plain}
\newtheorem{proposition}[definition]{Proposition}
\newtheorem{lemma}[definition]{Lemma}
\newtheorem{theorem}[definition]{Theorem}
\newtheorem{corollary}[definition]{Corollary}
\newtheorem*{lemma*}{Lemma}
\newtheorem*{theorem*}{Theorem}
\newtheorem*{corollary*}{Corollary}
\theoremstyle{remark}
\newtheorem{remark}[definition]{Remark}
\newcommand{\axc}[1]{\AxiomC{#1}}
\newcommand{\uic}[2]{\RightLabel{\small{#2}}\UnaryInfC{#1}}
\newcommand{\bic}[2]{\RightLabel{\small{#2}}\BinaryInfC{#1}}
\newcommand{\tic}[2]{\RightLabel{\small{#2}}\TrinaryInfC{#1}}
\newcommand{\caseof}[5]{\mathsf{case} ~ #1 ~\mathsf{of} ~ \left(#2.#3 \| #4.#5\right)}
\newcommand{\destas}[4]{\mathsf{dest} ~ #1 ~\mathsf{as} ~ \left(#2.#3\right) ~\mathsf{in}~ #4}
\newcommand{\shift}[2]{\mathcal{S}#1.#2}
\newcommand{\reset}[1]{\# #1}
\newcommand{\calU}{\mathcal{U}}
\newcommand{\sforces}{\Vdash_{\!\!\! s}}
\newcommand{\svd}{\Vdash_{\!\!\! s}}
\newcommand{\exploding}{\Vdash_{\!\!\bot}}
\newcommand{\explod}[1]{\Vdash^{\!\!#1}_{\!\!\bot}}
\DeclareMathOperator{\inl}{inl}
\DeclareMathOperator{\inr}{inr}
\newcommand{\icont}[3]{\forall #2'\ge #2.~ \left(#3 \to #2'\vdash #1\right)}
\newcommand{\idcont}[3]{\forall C.~ \forall #2'\ge #2.~ \left(\left(\forall #2''\ge #2'.~ #3 \to #2''\vdash #1\right) \to #2'\vdash #1\right)}
\begin{document}

\begin{frontmatter}
  \title{Continuation-passing Style Models Complete for Intuitionistic Logic}
  \author{Danko Ilik}
  \address{University ``Goce Delčev'' -- Štip\\
    Address: Faculty of Informatics, PO Box 201, Štip, Macedonia\\
    E-mail: dankoilik@gmail.com}
  
  \begin{abstract}A class of models is presented, in the form of continuation monads polymorphic for first-order individuals, that is sound and complete for minimal intuitionistic predicate logic. The proofs of soundness and completeness are constructive and the computational content of their composition is, in particular, a $\beta$-normalisation-by-evaluation program for simply typed lambda calculus with sum types. Although the inspiration comes from Danvy's type-directed partial evaluator for the same lambda calculus, the there essential use of delimited control operators (i.e. computational effects) is avoided. The role of polymorphism is crucial -- dropping it allows one to obtain a notion of model complete for classical predicate logic. The connection between ours and Kripke models is made through a strengthening of the Double-negation Shift schema. 
  \end{abstract}

  \begin{keyword}
    intuitionistic logic \sep completeness \sep Kripke models \sep Double-negation Shift \sep normalization by evaluation
  
    \MSC 03B20 \sep 03B35 \sep 03B40 \sep 68N18 \sep 03F55 \sep 03F50 \sep 03B55
  \end{keyword}

\end{frontmatter}

\section{Introduction}

Although Kripke models are standard semantics for intuitionistic logic, there is as yet no (simple) constructive proof of their completeness when one considers all logical connectives. While Kripke's original proof \cite{Kripke1963} was classical, Veldman gave an intuitionistic one \cite{Veldman1976} by using Brouwer's Fan Theorem to handle disjunction and the existential quantifier. To see what the computational content behind Veldman's proof is, one might consider a realisability interpretation of the Fan Theorem (for example \cite{BergerO2005}), but, all known realisers being defined by general recursion, due to the absence of an elementary proof of their termination, it is not clear whether one can think of the program using them as a constructive proof or not.

On the other hand, a connection between normalisation-by-evaluation (NBE) \cite{BergerS1991} for simply typed lambda calculus, $\lambda^\to$, and completeness for Kripke models for the fragment $\{\wedge,\Rightarrow,\forall\}$ has been made \cite{CCoquand1993,HerbelinLee2009}. We review this connection in Section~\ref{sec:nbe}. There we also look at Danvy's extension \cite{Danvy1996} of NBE from $\lambda^\to$ to $\lambda^{\to\vee}$, simply typed lambda calculus with sum types. Even though Danvy's algorithm is simple and elegant, he uses the full power of delimited control operators which do not yet have a typing system that permits to understand them logically. We deal with that problem in Section~\ref{sec:completeness}, by modifying the notion of Kripke model so that we can give a proof of completeness for full intuitionistic logic in continuation-passing style, that is, without relying on having delimited control operators in our meta-language. In Section~\ref{sec:correctness}, we extract the algorithm behind the given completeness proof, a $\beta$-NBE algorithm for $\lambda^{\to\vee}$. In Section~\ref{sec:variants}, we stress the importance of our models being dependently typed, by comparing them to similar models that are complete for classical logic \cite{IlikLH2010}. We there also relate our and Kripke models by showing that the two are equivalent in presence of a strengthening of the Double-negation Shift schema \cite{Spector,TroelstraVD1}. We conclude with Section~\ref{sec:conclusion} by mentioning related work.

The proofs of Section~\ref{sec:completeness} have been formalised in the Coq proof assistant in \cite{intcomp_formalisation}, which also represents an implementation of the NBE algorithm.

\section{Normalisation-by-Evaluation as Completeness}\label{sec:nbe}

In \cite{BergerS1991}, Berger and Schwichtenberg presented a proof of normalisation of $\lambda^\to$ which does not involve reasoning about the associated reduction relation. Instead, they interpret $\lambda$-terms in a domain, or ambient meta-language, using an evaluation function,
\[
\llbracket-\rrbracket : \Lambda \to D,
\]
and then they define an inverse to this function, which from the denotation in $D$ directly extracts a term in $\beta\eta$-long normal form.  The inverse function $\downarrow$, called \emph{reification}, is defined by recursion on the type $\tau$ of the term, at the same time defining an auxiliary function $\uparrow$, called \emph{reflection}:
\begin{align*}
  \downarrow^\tau &: D \to \Lambda\text{-nf} \\
  \downarrow^\tau &:= a\mapsto a & \tau\text{-atomic}\\
  \downarrow^{\tau\to\sigma} &:= S\mapsto \lambda a. \downarrow^\sigma(S\cdot\uparrow^\tau a) & a\text{-fresh}\\
  \\
  \uparrow^\tau &: \Lambda\text{-ne} \to D\\
  \uparrow^\tau &:= a\mapsto a & \tau\text{-atomic}\\
  \uparrow^{\tau\to\sigma} &:= e\mapsto S\mapsto \uparrow^\sigma e (\downarrow^\tau S)
\end{align*}
Here, $S$ ranges over members of $D$, and we used $\mapsto$ and $\cdot$ for abstraction and application at the meta-level. The subclasses of normal and neutral $\lambda$-terms are given by the following inductive definition.
\begin{align*}
  \Lambda\text{-nf} \ni r &:= \lambda a^\tau.r^\sigma ~|~ e^\tau & \lambda\text{-terms in normal form}\\
  \Lambda\text{-ne} \ni e &:= a^\tau ~|~ e^{\tau\to\sigma} r^\tau& \text{neutral } \lambda\text{-terms}
\end{align*}\label{lambda_neutral}

It was a subsequent realisation of Catarina Coquand \cite{CCoquand1993}, that the evaluation algorithm $\llbracket\cdot\rrbracket$ is also the one underlying the Soundness Theorem for minimal intuitionistic logic (with $\Rightarrow$ as the sole logical connective) with respect to Kripke models, and that the reification algorithm $\downarrow$ is also the one underlying the corresponding Completeness Theorem. 

\begin{definition}\label{kripke_model} A \emph{Kripke model} is given by a preorder $(K,\le)$ of \emph{possible worlds}, a binary relation of \emph{forcing} $(-) \Vdash (-)$ between worlds and atomic formulae, and a \emph{family of domains of quantification} $D(-)$, such that,
  \begin{align*}
    \text{ for all } w'\ge w&, w\Vdash X \to w'\Vdash X, \text{ and }\\
    \text{ for all } w'\ge w&, D(w)\subseteq D(w').
  \end{align*}
  The relation of forcing is then extended from atomic to composite formulae by the clauses:
  \begin{align*}
    w\Vdash A\wedge B & := w\Vdash A \text{ and } w\Vdash B\\
    w\Vdash A \lor B & := w\Vdash A \text{ or } w \Vdash B \\
    w\Vdash A\Rightarrow B & := \text{ for all } w'\ge w, w'\Vdash A \Rightarrow w'\Vdash B\\
    w\Vdash \forall x.A(x) & := \text{ for all } w'\ge w \text{ and } t\in D(w'), w'\Vdash A(t)\\
    w\Vdash \exists x.A(x) & := \text{ for some } t\in D(w), w\Vdash A(t)\\
    w\Vdash \bot & := \text{ false } \\
    w\Vdash \top & := \text{ true }
  \end{align*}
\end{definition}

More precisely, the following well-known statements hold and their proofs have been machine-checked \cite{CCoquand2002,HerbelinLee2009} for the logic fragment generated by the connectives $\{\Rightarrow,\wedge,\forall\}$.

\begin{theorem}[Soundness] If $\Gamma\vdash p:A$ then, in any Kripke model, for any world $w$, if $w\Vdash\Gamma$ then $w\Vdash A$.
\end{theorem}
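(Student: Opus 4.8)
The plan is to proceed by induction on the derivation $\Gamma\vdash p:A$, showing that in any Kripke model and at any world $w$, the hypothesis $w\Vdash\Gamma$ (meaning $w\Vdash C$ for every $C\in\Gamma$) entails $w\Vdash A$. Since we are working with natural deduction for the fragment $\{\Rightarrow,\wedge,\forall\}$, there is one case for the axiom rule and two cases (introduction and elimination) for each connective, plus the structural machinery for contexts. The axiom case is immediate: if $A\in\Gamma$ then $w\Vdash A$ is precisely one of the assumptions packaged in $w\Vdash\Gamma$.

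For the introduction rules I would argue as follows. For $\Rightarrow$-introduction, from $\Gamma,A\vdash B$ and an arbitrary $w'\ge w$ with $w'\Vdash A$, I first need $w'\Vdash\Gamma$; this is where I invoke the monotonicity of forcing, which must first be established as a preliminary lemma: \emph{for all formulae $C$ and all $w'\ge w$, $w\Vdash C$ implies $w'\Vdash C$}. That lemma itself is proved by induction on $C$, using reflexivity and transitivity of $\le$, the monotonicity assumptions on atomic forcing and on $D(-)$ built into Definition~\ref{kripke_model}, and — crucially — the fact that the $\Rightarrow$ and $\forall$ clauses already quantify over future worlds, so they are monotone essentially by transitivity. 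Granting this, $w'\Vdash\Gamma$ holds, the induction hypothesis gives $w'\Vdash B$, and hence $w\Vdash A\Rightarrow B$. For $\wedge$-introduction one simply pairs the two induction hypotheses; for $\forall$-introduction, given $w'\ge w$ and $t\in D(w')$, one applies monotonicity to get $w'\Vdash\Gamma$ and then the induction hypothesis for the premise $\Gamma\vdash A(x)$ with the eigenvariable instantiated to $t$ (the eigenvariable condition ensures $x$ does not occur free in $\Gamma$, so $w'\Vdash\Gamma$ is unaffected).

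For the elimination rules the arguments are shorter. For $\Rightarrow$-elimination, the induction hypotheses give $w\Vdash A\Rightarrow B$ and $w\Vdash A$; instantiating the first at $w'=w$ (using reflexivity $w\ge w$) yields $w\Vdash B$. For $\wedge$-elimination one projects; for $\forall$-elimination one instantiates the universal clause at $w'=w$ and the desired term $t\in D(w)$. I expect the main obstacle — really the only non-bureaucratic point — to be the monotonicity lemma, since it is the one place where the definition of forcing on compound formulae interacts nontrivially with the preorder structure, and getting the quantifier cases right (recognising that the "for all $w'\ge w$" built into the $\Rightarrow$ and $\forall$ clauses makes monotonicity fall out of transitivity rather than requiring a separate induction hypothesis application) is the step a careful writer must not gloss over. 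Everything else is a routine traversal of the derivation, and the whole argument is evidently constructive, which is the point Coquand's observation exploits: read computationally, this induction \emph{is} the evaluation function $\llbracket-\rrbracket$.
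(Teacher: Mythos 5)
Your proposal is correct and follows exactly the route the paper takes (its proof is simply ``by a simple induction on the length of the derivation''); you have merely filled in the standard details, including the monotonicity lemma for forcing, which is indeed the only non-routine ingredient.
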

\begin{proof} By a simple induction on the length of the derivation.\end{proof}

\begin{theorem}[Model Existence or Universal Completeness]\label{intcalU} There is a model $\calU$ (the ``universal model'') such that, given a world $w$ of $\calU$, if $w\Vdash A$, then there exists a term $p$ and a derivation in normal form $w\vdash p:A$.
\end{theorem}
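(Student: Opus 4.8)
The plan is to build the universal model $\calU$ directly out of the syntax and then extract the theorem from a pair of mutually recursive lemmas: one sending neutral derivations into the forcing relation, the other sending forcing back into normal derivations. These are exactly the semantic incarnations of the reflection $\uparrow$ and reification $\downarrow$ maps recalled above.

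First I would take the worlds of $\calU$ to be the typing contexts $\Gamma$, preordered by inclusion ($\Gamma'\ge\Gamma$ iff $\Gamma\subseteq\Gamma'$), working with a fixed countably infinite reservoir of proof variables and of individual variables so that fresh names are always at hand. The family of domains is $D(\Gamma):=$ the first-order terms whose free individual variables are declared in $\Gamma$ (equivalently, all terms over the signature extended by the reserved individual variables); in either reading $\Gamma\subseteq\Gamma'$ yields $D(\Gamma)\subseteq D(\Gamma')$, as Definition~\ref{kripke_model} requires. Atomic forcing is defined by: $\Gamma\Vdash X$ iff $\Gamma\vdash e:X$ for some neutral $e$; monotonicity of forcing at atoms is then nothing but weakening of derivations.

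Next, the heart of the proof: establish simultaneously, by induction on the (logical complexity of the) formula $A$, the two statements (i) \emph{reflection} --- if $\Gamma\vdash e:A$ with $e$ neutral then $\Gamma\Vdash A$ --- and (ii) \emph{reification} --- if $\Gamma\Vdash A$ then $\Gamma\vdash r:A$ for some normal $r$. At an atom both directions are immediate (for (ii), note that a neutral term is in particular normal). For $A\Rightarrow B$: for (i), given a neutral $e$, a world $\Gamma'\ge\Gamma$, and a proof of $\Gamma'\Vdash A$, apply (ii) at $A$ to obtain a normal $r$, form the neutral term $e\,r:B$ in $\Gamma'$, and conclude $\Gamma'\Vdash B$ by (i) at $B$; hence $\Gamma\Vdash A\Rightarrow B$. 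For (ii), pick a fresh variable $a:A$, apply (i) at $A$ to the neutral term $a$ to get $\Gamma,a:A\Vdash A$, use the forcing clause for $\Rightarrow$ (the extended context is $\ge\Gamma$) to obtain $\Gamma,a:A\Vdash B$, apply (ii) at $B$ to get a normal $r:B$, and return $\lambda a.r$, which is normal of type $A\Rightarrow B$. The cases of $\wedge$ and $\forall$ go the same way, using the introduction and elimination rules for $\wedge$ and $\forall$ respectively (with a fresh individual variable in the $\forall$ case). Finally, instantiating (ii) at an arbitrary world $w=\Gamma$ with $w\Vdash A$ produces the required term $p$ together with a normal derivation $w\vdash p:A$.

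The genuinely delicate bookkeeping --- maintaining the freshness invariant and checking that $D(-)$ is monotone --- is routine, so I expect no real obstacle there. The point I would emphasise instead is \emph{where} this argument would break for the full language: reflection for $A\vee B$ (resp.\ $\exists x.A(x)$) would require extracting, from a neutral derivation of the disjunction (resp.\ existential), a derivation of one of the disjuncts (resp.\ a witness together with a proof of its instance), which is in general impossible for a neutral term. This is precisely the gap that the modified models of Section~\ref{sec:completeness} are designed to bridge, and it is why the statement above is understood only for the fragment $\{\Rightarrow,\wedge,\forall\}$.
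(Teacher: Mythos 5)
Your proposal is correct and takes essentially the same route as the paper: the same syntactic universal model (contexts as worlds ordered by inclusion, atomic forcing as derivability in normal form) followed by the simultaneous induction establishing reflection and reification, which the paper states more tersely. The only cosmetic difference is that you define atomic forcing via neutral rather than normal derivations, which coincide at atomic types in this fragment, so nothing changes.
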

\begin{proof}
  The universal model $\calU$ is built by setting:
  \begin{itemize}
  \item $K$ to be the set of contexts $\Gamma$;
  \item ``$\le$'' to be the subset relation of contexts;
  \item ``$\Gamma \Vdash X$'' to be the set of derivations in normal form $\Gamma\vdash^\text{nf} X$, for $X$ an atomic formula.
  \end{itemize}

  One then proves simultaneously, by induction on the complexity of $A$, that the two functions defined above, reify ($\downarrow$) and reflect ($\uparrow$), are correct, that is, that $\downarrow$ maps a member of $\Gamma\Vdash A$ to a normal proof term (derivation) $\Gamma\vdash p:A$, and that $\uparrow$ maps a neutral term (derivation) $\Gamma\vdash e:A$ to a member of $\Gamma\Vdash A$.\qed
\end{proof}

\begin{corollary}[Completeness (usual formulation)]\label{intnbe} If in any Kripke model, at any world $w$, $w\Vdash \Gamma$ implies $w\Vdash A$, then there exists a term $p$ and a derivation $\Gamma\vdash p:A$.
\end{corollary}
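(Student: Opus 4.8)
The plan is to derive the statement from the Model Existence Theorem (Theorem~\ref{intcalU}) by instantiating the hypothesis's quantification over all Kripke models and all worlds with the \emph{universal} model $\calU$ and the particular world $\Gamma$ (recall that in $\calU$ the worlds are contexts, so $\Gamma$ is literally a world of $\calU$). The only thing to verify before the hypothesis becomes applicable is that $\Gamma\Vdash\Gamma$ holds in $\calU$, i.e. that the world $\Gamma$ forces every formula occurring in the context $\Gamma$.

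First I would establish this lemma $\Gamma\Vdash\Gamma$. For each $B\in\Gamma$ there is a neutral derivation consisting of just the axiom rule, $\Gamma\vdash a:B$, where $a$ is the variable associated to that hypothesis. Applying the reflection function $\uparrow^B$ to this neutral derivation — whose correctness is precisely the second half of what is proven by the simultaneous induction in Theorem~\ref{intcalU} — yields an element of $\Gamma\Vdash B$. Ranging over all $B\in\Gamma$ gives $\Gamma\Vdash\Gamma$. Note that monotonicity of forcing is not needed here, since everything takes place at the single world $\Gamma$.

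With $\Gamma\Vdash\Gamma$ in hand, I would apply the hypothesis of the corollary to the model $\calU$ and the world $\Gamma$, obtaining $\Gamma\Vdash A$. Finally, applying the reification direction of Theorem~\ref{intcalU} to this membership produces a term $p$ and a derivation (indeed one in normal form) $\Gamma\vdash p:A$, which is the desired conclusion. I do not expect a genuine obstacle: the only content is the lemma $\Gamma\Vdash\Gamma$, and even that is immediate once the reflection half of Theorem~\ref{intcalU} is available; the remainder is just matching the shape of the quantifiers in the hypothesis against the universal model and chaining ``reflect the hypotheses, then reify the conclusion.''
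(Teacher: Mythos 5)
Your proposal is correct and follows essentially the same route as the paper: instantiate the hypothesis at the universal model $\calU$ and the world $\Gamma$, obtain $\Gamma\Vdash\Gamma$ from the reflection ($\uparrow$) half of Theorem~\ref{intcalU} applied to the axiom derivations, and then reify ($\downarrow$) to extract the normal derivation $\Gamma\vdash p:A$. The extra detail you supply for the lemma $\Gamma\Vdash\Gamma$ is exactly what the paper leaves implicit.
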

\begin{proof}
If $w\Vdash\Gamma \to w\Vdash A$ in any Kripke model, then also $w\Vdash\Gamma \to w\Vdash A$ in the model $\calU$ above. Since from the $\uparrow$-part of Theorem~\ref{intcalU} we have that $\Gamma\Vdash\Gamma$, then from the $\downarrow$-part of the same theorem there exists a term $p$ such that $\Gamma\vdash p:A$.\qed
\end{proof}

If one wants to extend this technique for proving completeness for Kripke models to the rest of the intuitionistic connectives, $\bot$, $\vee$ and $\exists$, the following meta-mathematical problems appear, which have been investigated in the middle of the last century. At that time, Kreisel, based on observations of G\"odel, showed (Theorem 1 of \cite{Kreisel1962}) that for a wide range of intuitionistic semantics, into which Kripke's can also be fit:
\begin{itemize}
\item If one can prove the completeness for the negative fragment of formulae (built using $\wedge, \bot, \Rightarrow, \forall$, and negated atomic formulae, $X\Rightarrow\bot$) then one can prove Markov's Principle. In view of Theorem~\ref{intcalU}, this implies that having a completeness proof cover $\bot$ means being able to prove Markov's Principle -- which is known to be independent of many constructive logical systems, like Heyting Arithmetic or Constructive Type Theory.
\item If one can prove the completeness for all connectives, i.e. including $\vee$ and $\exists$, then one can prove a strengthening\footnote{A special case of D-DNS$^+$ from page \pageref{ddnsplus}.} of the Double-negation Shift schema on $\Sigma^0_1$-formulae, which is also independent because it implies Markov's Principle.
\end{itemize}
We mentioned that Veldman \cite{Veldman1976} used Brouwer's Fan Theorem to handle $\vee$ and $\exists$, but to handle $\bot$ he included in his version of Kripke models an ``exploding node'' predicate, $\Vdash_\bot$ and defined $w\Vdash\bot := w\Vdash_\bot$. 
We remark in passing that Veldman's modification does not defy Kripke original definition, but only makes it more regular: if in Definition~\ref{kripke_model} one considers $\bot$ as an atomic formula, rather than a composite one, one falls back to Veldman's definition.

One can also try to straightforwardly extend the NBE-Completeness proof to cover disjunction (the existential quantifier is analogous) and see what happens. If one does that, one sees that a problem appears in the case of reflection of sum, $\uparrow^{A\vee B}$. There, given a neutral $\lambda$-term that derives $A\vee B$, one is supposed to prove that $w\Vdash A\vee B$ holds, which by definition means to prove that either $w\Vdash A$ or $w\Vdash B$ holds. But, since the input $\lambda$-term is neutral, it represents a blocked computation from which we will only be able to see whether $A$ or $B$ was derived, once we substitute values for the contained free variables that block the computation.

That is where the 
solution of Olivier Danvy appears. In \cite{Danvy1996}, he used the full power\footnote{We say ``full power'' because his usage of delimited control operators is strictly more powerful than what is possible with (non-delimited) control operators like call/cc. Danvy's program makes non-tail calls with continuations, while in the CPS translation of a program that uses call/cc all continuation calls are tail calls.} of the delimited control operators shift ($\shift{k}{p}$) and reset ($\#$) \cite{DanvyF1989} to give the following normalisation-by-evaluation algorithm for $\lambda^{\to\vee}$:
\label{intcomp_tdpe_algo}
\begin{align*}
  \downarrow^\tau &: D \to \Lambda\text{-nf} \\
  \downarrow^\tau &:= a\mapsto a & \tau\text{-atomic}\\
  \downarrow^{\tau\to\sigma} &:= S\mapsto \lambda a. \reset \downarrow^\sigma(S\cdot\uparrow^\tau a) & a\text{-fresh}\\
  \downarrow^{\tau\vee\sigma} &:= S\mapsto
  \left\{
    \begin{array}{ll}
      \iota_1 (\downarrow^\tau S') &, \text{ if } S=\inl\cdot S'\\
      \iota_2 (\downarrow^\sigma S') &, \text{ if } S=\inr\cdot S'\\
    \end{array}
  \right.\\
  \uparrow^\tau &: \Lambda\text{-ne} \to D\\
  \uparrow^\tau &:= a\mapsto a & \tau\text{-atomic}\\
  \uparrow^{\tau\to\sigma} &:= e\mapsto S\mapsto \uparrow^\sigma e (\downarrow^\tau S)\\
  \uparrow^{\tau\vee\sigma} &:= e\mapsto \shift{\kappa}{\caseof{e}{a_1}{\reset \kappa\cdot(\inl\cdot(\uparrow^\tau a_1))}{a_2}{\reset \kappa\cdot(\inr\cdot(\uparrow^\sigma a_2))}} & a_i\text{-fresh}
\end{align*}
We characterise explicitly normal and neutral $\lambda$-terms by the following inductive definitions.
\begin{align*}
  \Lambda\text{-nf} \ni r &:= e^\tau ~|~ \lambda a^\tau.r^\sigma ~|~ \iota^\tau_1 r ~|~ \iota^\tau_2 r\\
  \Lambda\text{-ne} \ni e &:= a^\tau ~|~ e^{\tau\to\sigma} r^\tau ~|~ \caseof{e^{\tau\vee\sigma}}{a_1^\tau}{r_1^\rho}{a_2^\sigma}{r_2^\rho}
\end{align*}\label{intcomp_lambda_neutral}

Given Danvy's NBE algorithm, which is simple and appears correct\footnote{For more details on the computational behaviour of shift/reset and the algorithm itself, we refer the reader to the original paper \cite{Danvy1996} and to Section 3.2 of \cite{IlikThesis}.}, does this mean that we can obtain a constructive proof of completeness for Kripke models if we permit delimited control operators in our ambient meta-language? Unfortunately, not, or not yet, because the available typing systems for them are either too complex (type-and-effect systems \cite{DanvyF1989} change the meaning of implication), or do not permit to type-check the algorithm as a completeness proof (for example the typing system from \cite{FilinskiThesis}, or the one from Chapter 4 of \cite{IlikThesis}).

\section{Kripke-CPS Models and Their Completeness}\label{sec:completeness}

However, there is a close connection between shift and reset, and the continuation-passing style (CPS) translations \cite{DanvyF1990}. We can thus hope to give a normalisation-by-evaluation proof for full intuitionistic logic in continuation-passing style. 

In this section we present a notion of model that we developed following this idea, by suitably inserting continuations into the notion of Kripke model. We prove that the new models are sound and complete for full intuitionistic predicate logic. 

\begin{definition}\label{intcomp_model_def} An \emph{Intuitionistic Kripke-CPS model (IK-CPS)} is given by:
  \begin{itemize}
  \item a preorder $(K, \le)$ of \emph{possible worlds};
  \item a \fbox{binary} relation on worlds $(-) \explod{(-)}$ labelling a world as \emph{exploding};
  \item a binary relation $(-) \svd (-)$ of \emph{strong forcing} between worlds and atomic formulae, such that
    \[
    \text{ for all } w'\ge w, w\svd X \to w'\svd X,
    \]
  \item and a domain of quantification $D(w)$ for each world $w$, such that
    \[
    \text{ for all } w'\ge w, D(w)\subseteq D(w').
    \]
  \end{itemize}
  The relation $(-) \svd (-)$ of \emph{strong forcing} is \emph{extended from atomic to composite formulae} inductively and by simultaneously defining \fbox{one} new relation, (non-strong) forcing:
  \begin{itemize}
  \item[$\star$] A formula $A$ is \emph{forced} in the world $w$ (notation $w\Vdash A$) if, \fbox{for any formula $C$,}
    \[
    \forall w'\ge w.~ \left(\forall w''\ge w'.~ w''\sforces A \to w''\explod{C}\right) \to w'\explod{C};
    \]
  \item $w\svd A\wedge B$ if $w\Vdash A$ and $w\Vdash B$;
  \item $w\svd A\vee B$ if $w\Vdash A$ or $w\Vdash B$;
  \item $w\svd A\Rightarrow B$ if for all $w'\ge w$, $w\Vdash A$ implies $w\Vdash B$;
  \item $w\svd \forall x. A(x)$ if for all $w'\ge w$ and all $t\in D(w')$, $w'\Vdash A(t)$;
  \item $w\svd \exists x. A(x)$ if $w\Vdash A(t)$ for some $t\in D(w)$.
  \end{itemize}
\end{definition}

\begin{remark} Certain details of the definition have been put into boxes to facilitate the comparison carried out in Section \ref{sec:variants}.
\end{remark}

\begin{lemma}\label{intcomp_model_monotone} Strong forcing and (non-strong) forcing are monotone in any IK-CPS model, that is, given $w'\ge w$, $w\svd A$ implies $w'\svd A$, and $w\Vdash A$ implies $w'\Vdash A$.
\end{lemma}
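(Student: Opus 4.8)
I would prove both halves by a single structural induction on $A$, after first isolating the observation that monotonicity of non-strong forcing is essentially free. Spelling out $w\Vdash A$: it asserts that for every formula $C$ and every $w'\ge w$, if $w''\svd A\to w''\explod{C}$ for all $w''\ge w'$, then $w'\explod{C}$. Given $w'\ge w$, any world above $w'$ is above $w$ by transitivity of the preorder, so the instance of this hypothesis at $w'$ is literally the conclusion required for $w'\Vdash A$; this argument inspects neither the shape of $A$, nor the induction hypothesis, nor any monotonicity of the exploding relation $\explod{(-)}$. Hence monotonicity of $\Vdash$ holds for every formula and may be treated as available throughout the rest of the argument.

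It then remains to establish $w\svd A\to w'\svd A$ for $w'\ge w$, by cases on the principal connective of $A$. For $A$ atomic this is exactly the monotonicity clause postulated in the definition of an IK-CPS model. For $A$ of the form $B\wedge C$, $B\vee C$, $B\Rightarrow C$ or $\forall x.\,B(x)$, the defining clause of $w\svd A$ is built from $\Vdash$ on the immediate subformula(e), evaluated at worlds $\ge w$ (with a further quantification over later worlds in the $\Rightarrow$ and $\forall$ cases), so the claim drops out of the monotonicity of $\Vdash$ just established, together with transitivity of $\le$. For $A=\exists x.\,B(x)$, a witness $t\in D(w)$ with $w\Vdash B(t)$ still lies in $D(w')$ by the domain-inclusion condition $D(w)\subseteq D(w')$, and $w'\Vdash B(t)$ holds by monotonicity of $\Vdash$; so the same $t$ witnesses $w'\svd\exists x.\,B(x)$.

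I do not expect a genuine obstacle. The only thing to watch is the dependency order: monotonicity of $\svd$ at a compound formula is derived from monotonicity of $\Vdash$ at its subformulae, while $w\Vdash A$ is itself defined through $w\svd A$ for the \emph{same} $A$ — but since, as noted, monotonicity of $\Vdash$ needs nothing about $\svd A$ and no induction hypothesis, there is no circularity and the whole proof reduces to a routine case analysis, the induction being effectively carried only on the strong-forcing clause. The domain-inclusion hypothesis of the model is used exactly once, in the existential case, to transport the witness from $D(w)$ to $D(w')$.
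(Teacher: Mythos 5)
Your proof is correct and follows essentially the same route as the paper, which likewise notes that monotonicity of (non-strong) forcing is immediate from the definition (by transitivity of $\le$, without unfolding $\svd$) and handles strong forcing by a case analysis on the formula, using the domain-inclusion condition only in the existential case. Your additional observation that the ``induction'' on strong forcing never genuinely invokes an induction hypothesis is a fair refinement but does not change the argument.
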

\begin{proof} Monotonicity of strong forcing is proved by induction on the complexity of the formula, while that of forcing is by definition. The proof is easy and available in the Coq formalisation.
\end{proof}

\begin{lemma}\label{intcomp_model_monad} The following monadic operations are definable for IK-CPS models:
  \begin{description}
  \item[``unit'' $\eta(\cdot)$ ] $w\sforces A \to w\Vdash A$
  \item[``bind'' $(\cdot)^*(\cdot)$ ] $(\forall w'\ge w.~ w'\sforces A \to w'\Vdash B) \to w\Vdash A \to w\Vdash B$
  \end{description}
\end{lemma}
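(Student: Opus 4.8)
The plan is to define $\eta$ and $(\cdot)^*$ directly by unfolding the definition of $w \Vdash A$ and exploiting its characteristic shape: $w\Vdash A$ says that for any formula $C$, $w$ lies in the ``double-negation closure'' of strong forcing of $A$ relative to exploding-at-$C$. Concretely, $w\Vdash A$ means $\forall C.\,\forall w'\ge w.\,(\forall w''\ge w'.\,w''\svd A \to w''\explod{C}) \to w'\explod{C}$. The unit $\eta$ should take a proof of $w\svd A$ and produce a proof of $w\Vdash A$; the bind should, given a continuation-like map $f:\forall w'\ge w.\,w'\svd A\to w'\Vdash B$ and a proof of $w\Vdash A$, produce a proof of $w\Vdash B$. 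Both are essentially the standard continuation-monad operations, adapted to the Kripke indexing.

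For $\eta$: assume $w\svd A$. To show $w\Vdash A$, fix $C$, fix $w'\ge w$, and assume $k:\forall w''\ge w'.\,w''\svd A\to w''\explod{C}$; we must produce $w'\explod{C}$. Apply $k$ at $w''=w'$ (using reflexivity of $\le$), feeding it the monotonicity-transported proof $w'\svd A$ obtained from $w\svd A$ via Lemma~\ref{intcomp_model_monotone} and $w'\ge w$. That yields $w'\explod{C}$, as required.

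For bind: assume $f:\forall w'\ge w.\,w'\svd A\to w'\Vdash B$ and $m:w\Vdash A$. To show $w\Vdash B$, fix $C$, fix $w_1\ge w$, and assume $k:\forall w_2\ge w_1.\,w_2\svd B\to w_2\explod{C}$; we must derive $w_1\explod{C}$. Instantiate $m$ at the \emph{same} $C$ and at the world $w_1$ (using $w_1\ge w$); it then suffices to supply a map $\forall w'\ge w_1.\,w'\svd A\to w'\explod{C}$. So fix $w'\ge w_1$ and assume $w'\svd A$. Apply $f$ at $w'$ (using $w'\ge w_1\ge w$ and transitivity of $\le$) and the hypothesis $w'\svd A$ to get $w'\Vdash B$; instantiate that at $C$ and at $w'$ itself (reflexivity) with the continuation obtained from $k$ restricted to worlds $\ge w'$ (legal since $w'\ge w_1$, plus transitivity), yielding $w'\explod{C}$. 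This closes the derivation and gives $w_1\explod{C}$, hence $w\Vdash B$.

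The only non-trivial ingredient is monotonicity of strong forcing, i.e.\ Lemma~\ref{intcomp_model_monotone}, needed in the $\eta$ case; everything else is reflexivity/transitivity of the preorder and careful threading of the polymorphic parameter $C$ and the worlds. The main subtlety to watch for — rather than an obstacle — is that when instantiating $m$ (resp.\ the inner $w'\Vdash B$) one must reuse the \emph{ambient} $C$ coming from the goal $w\Vdash B$, not introduce a fresh one; this is exactly the point at which the polymorphism over $C$ in the definition of forcing does real work, and it is what makes the construction go through without delimited control operators in the meta-language.
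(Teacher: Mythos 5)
Your proof is correct and is essentially the paper's own argument: the paper gives only the continuation-monad skeletons $\eta(\alpha)=\kappa\mapsto\kappa\cdot\alpha$ and $(\phi)^*(\alpha)=\kappa\mapsto\alpha\cdot(\beta\mapsto\phi\cdot\beta\cdot\kappa)$, leaving the worlds, the parameter $C$, and the appeal to Lemma~\ref{intcomp_model_monotone} implicit, and your write-up fills in exactly those details (correctly noting that monotonicity of strong forcing is needed only for $\eta$, the rest being reflexivity/transitivity of $\le$ and reuse of the ambient $C$).
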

\begin{proof} Easy, using Lemma \ref{intcomp_model_monotone}.
  If we leave implicit the handling of formulae $C$, worlds, and monotonicity, we have the following procedures behind the proofs.
  \begin{align*}
    \eta (\alpha) &= \kappa\mapsto \kappa \cdot \alpha\\
    (\phi)^*(\alpha) &= \kappa\mapsto \alpha\cdot(\beta\mapsto \phi\cdot\beta\cdot\kappa)
  \end{align*}
  \qed
\end{proof}

\begin{table}[h!]
  \centering
  \begin{tabular}{ m{5cm} m{6cm} }
    \begin{prooftree}
      \axc{$(a:A) \in \Gamma$}
      \uic{$\Gamma\vdash a:A$}{\textsc{Ax}}
    \end{prooftree}
    &

    \\
    \begin{prooftree}
      \axc{$\Gamma\vdash p:A_1$}
      \axc{$\Gamma\vdash q:A_2$}
      \bic{$\Gamma\vdash (p,q):A_1\wedge A_2$}{$\wedge_I$}
    \end{prooftree}
    &
    \begin{prooftree}
      \axc{$\Gamma\vdash p:A_1\wedge A_2$}
      \uic{$\Gamma\vdash \pi_i p:A_i$}{$\wedge^i_E$}
    \end{prooftree}
    \\
    \begin{prooftree}
      \axc{$\Gamma\vdash p:A_i$}
      \uic{$\Gamma\vdash \iota_i p:A_1\vee A_2$}{$\vee^i_I$}
    \end{prooftree}
    & \\
    \multicolumn{2}{ m{11cm} }{
      \begin{prooftree}
        \axc{$\Gamma\vdash p:A_1\vee A_2$}
        \axc{$\Gamma, a_1:A_1\vdash q_1:C$}
        \axc{$\Gamma, a_2:A_2\vdash q_2:C$}
        \tic{$\Gamma\vdash \caseof{p}{a_1}{q_1}{a_2}{q_2}:C$}{$\vee_E$}
      \end{prooftree}
    }
    \\
    \begin{prooftree}
      \axc{$\Gamma, a:A_1\vdash p:A_2$}
      \uic{$\Gamma\vdash \lambda a.p:A_1\Rightarrow A_2$}{$\Rightarrow_I$}
    \end{prooftree}
    &
    \begin{prooftree}
      \axc{$\Gamma\vdash p:A_1\Rightarrow A_2$}
      \axc{$\Gamma\vdash q:A_1$}
      \bic{$\Gamma\vdash p q:A_2$}{$\Rightarrow_E$}
    \end{prooftree}
    \\
    \begin{prooftree}
      \axc{$\Gamma\vdash p:A(x)$}
      \axc{$x\text{-fresh}$}
      \bic{$\Gamma\vdash \lambda x.p:\forall x. A(x)$}{$\forall_I$}
    \end{prooftree}
    &
    \begin{prooftree}
      \axc{$\Gamma\vdash p:\forall x.A(x)$}
      \uic{$\Gamma\vdash p t:A(t)$}{$\forall_E$}
    \end{prooftree}
    \\
    \begin{prooftree}
      \axc{$\Gamma\vdash p:A(t)$}
      \uic{$\Gamma\vdash (t,p):\exists x.A(x)$}{$\exists_I$}
    \end{prooftree}
    & \\
    \multicolumn{2}{ m{11cm} }{
      \begin{prooftree}
        \axc{$\Gamma\vdash p:\exists x.A(x)$}
        \axc{$\Gamma, a:A(x)\vdash q:C$}
        \axc{$x\text{-fresh}$}
        \tic{$\Gamma\vdash \destas{p}{x}{a}{q}:C$}{$\exists_E$}
      \end{prooftree}
    }
    \\
  \end{tabular}
  
  \caption[MQC with proof terms]{Proof term annotation for the natural deduction system of minimal intuitionistic predicate logic (MQC)}
  \label{tab:iqc_terms}
\end{table}

With Table~\ref{tab:iqc_terms}, we fix a derivation system and proof term notation for minimal intuitionistic predicate logic.   There are two kinds of variables, proof term variables $a,b,\ldots$ and individual (quantifier) variables $x,y,\ldots$. Individual constants are denoted by $t$. We rely on these conventions to resolve the apparent ambiguity of the syntax: the abstraction $\lambda a.p$ is a proof term for implication, while $\lambda x.p$ is a proof term for $\forall$; $(p,q)$ is a proof term for $\wedge$, while $(t,q)$ is a proof term for $\exists$.

We supplement the characterisation of normal and neutral terms from page~\pageref{intcomp_lambda_neutral}:
\begin{align*}
  \Lambda\text{-nf} \ni r := & e ~|~ \lambda a.r ~|~ \iota_1 r ~|~ \iota_2 r ~|~ (r_1,r_2) ~|~ \lambda x.r ~|~ (t,r) \\
  \Lambda\text{-ne} \ni e := & a ~|~ e r ~|~ \caseof{e}{a_1}{r_1}{a_2}{r_2} ~|~ \pi_1 e ~|~ \pi_2 e ~|~ e t ~|~ \\
   & ~ \destas{e}{x}{a}{r}
\end{align*}

As before, let $w\Vdash \Gamma$ denote that all formulae from $\Gamma$ are forced.

\begin{theorem}[Soundness]\label{int_soundness}
  If $\Gamma\vdash p:A$, then, in any world $w$ of any IK-CPS model, if $w\Vdash\Gamma$, then $w\Vdash A$.
\end{theorem}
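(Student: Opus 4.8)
The plan is to prove the statement by induction on the derivation $\Gamma\vdash p:A$, with one case per inference rule of Table~\ref{tab:iqc_terms}. Two facts do all the work. First, Lemma~\ref{intcomp_model_monotone}, applied formula by formula, upgrades to monotonicity of the context: if $w'\ge w$ and $w\Vdash\Gamma$ then $w'\Vdash\Gamma$; this is what lets induction hypotheses travel to later worlds. Second, Lemma~\ref{intcomp_model_monad}, whose $\eta(\cdot)$ and $(\cdot)^*(\cdot)$ behave exactly as the unit and bind of a continuation monad behave in a CPS translation of a natural-deduction proof. The $\text{Ax}$ case is immediate: $w\Vdash\Gamma$ already contains $w\Vdash A$.

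For the introduction rules the recipe is: construct a proof of \emph{strong} forcing of the principal formula directly from the induction hypotheses, then apply $\eta$ to obtain forcing. For $\wedge_I$ the hypotheses give $w\Vdash A_1$ and $w\Vdash A_2$, which is literally $w\svd A_1\wedge A_2$; for $\vee_I^i$ the hypothesis $w\Vdash A_i$ is one of the disjuncts of the clause for $w\svd A_1\vee A_2$; for $\exists_I$ the hypothesis $w\Vdash A(t)$ together with $t\in D(w)$ is the clause for $w\svd\exists x.A(x)$. For $\Rightarrow_I$, $\forall_I$ one fixes an arbitrary $w'\ge w$ (and, in the $\forall$ case, $t\in D(w')$), transports $w\Vdash\Gamma$ to $w'$ by context monotonicity, extends the context by the discharged hypothesis where appropriate, invokes the induction hypothesis to get $w'\Vdash A_2$ (resp.\ $w'\Vdash A(t)$), and closes with $\eta$. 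The freshness side conditions on $x$ in $\forall_I$ and $\exists_E$ are handled in the standard way, using domain monotonicity.

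For the elimination rules the recipe is dual: the induction hypothesis on the major premise gives $w\Vdash B$ for a compound $B$, and we feed it to $(\cdot)^*$ paired with a function $\forall w'\ge w.~ w'\svd B\to w'\Vdash A$ obtained by inspecting the strong-forcing witness at $w'$. Thus $\wedge_E^i$ projects; $\Rightarrow_E$ instantiates the clause of $w'\svd A_1\Rightarrow A_2$ at $w'$ itself, after lifting $w\Vdash A_1$ to $w'$; $\forall_E$ instantiates the universal clause at $w'$ and $t$; and $\vee_E$, $\exists_E$ do the case split, respectively unpack the existential witness, and then call the induction hypotheses of the branches after transporting $w\Vdash\Gamma$ to $w'$ and adjoining $a_i:A_i$ (resp.\ $a:A(t)$). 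Crucially, all of this takes place \emph{inside} $(\cdot)^*$, i.e.\ underneath the continuation hidden in the definition of $\Vdash$, so that no constructively illicit choice about a ``blocked'' disjunction or existential is ever committed in the open.

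The only genuinely delicate point, and the one I expect to require the most care, is the auxiliary universally quantified formula $C$ and the exploding relation $\explod{C}$ that sit inside the definition of $w\Vdash A$. One must check that the $\eta$ and $(\cdot)^*$ supplied by Lemma~\ref{intcomp_model_monad} already quantify over this $C$ uniformly, so that the soundness induction never has to unfold $\Vdash$ to $\explod{}$ by hand and never has to pick a witnessing $C$. It is exactly this uniformity that makes the positive-connective elimination cases $\vee_E$ and $\exists_E$ go through constructively here, and it is the reason the same induction fails for ordinary Kripke models; everything else is routine monad plumbing.
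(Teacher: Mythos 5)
Your proposal is correct and follows exactly the route the paper takes: the paper's proof is the one-line ``simple induction on the derivation,'' and the evaluation function given in Section~\ref{sec:correctness} is precisely your recipe ($\eta$ on a strong-forcing witness for introductions, $(\cdot)^*$ with a strong-forcing eliminator for eliminations, monotonicity to transport $w\Vdash\Gamma$). Your closing observation about the uniform handling of the parameter $C$ inside $\eta$ and $(\cdot)^*$ is also the point the paper itself singles out in Remark~\ref{polymorphism} as the reason the polymorphic quantification over $C$ is needed for soundness.
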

\begin{proof}
This is proved by a simple induction on the length of the derivation. We give the algorithm behind it in section ~\ref{sec:correctness}.
\end{proof}

\begin{remark}\label{polymorphism} The condition ``for all formula $C$'' in Definition \ref{intcomp_model_def} is only necessary for the soundness proof to go through, more precisely, the cases of elimination rules for $\vee$ and $\Rightarrow$. The completeness proof goes through even if we define forcing by
\[
\forall w'\ge w.~ \left(\forall w''\ge w'.~ w''\sforces A \to w''\explod{A}\right) \to w'\explod{A}.
\]
\end{remark}

\begin{definition}The \emph{Universal IK-CPS model} $\calU$\ is obtained by setting:
  \begin{itemize}
  \item $K$ to be the set of contexts $\Gamma$ of MQC;
  \item $\Gamma\le\Gamma'$ iff $\Gamma\subseteq\Gamma'$;
  \item $\Gamma\sforces X$ iff there is a derivation in normal form of $\Gamma\vdash X$ in MQC, where $X$ is an atomic formula;
  \item $\Gamma\explod{C}$ iff there is a derivation in normal form of $\Gamma\vdash C$ in MQC;
  \item for any $w$, $D(w)$ is a set of individuals for MQC (that is, $D(-)$ is a constant function from worlds to sets of individuals).
  \end{itemize}
  $(-)\sforces (-)$ is monotone because of the weakening property for intuitionistic ``$\vdash$''.
\end{definition}

\begin{remark} The difference between strong forcing ``$\sforces$'' and the exploding node predicate ``$\explod{C}$'' in $\calU$ is that the former is defined on atomic formulae, while the latter is defined on any kind of formulae.
\end{remark}

\begin{lemma}\label{intcomp_model_run} We can also define the monadic \emph{``run''} operation on the universal model $\calU$, \emph{for atomic formulae} $X$:
\[
\mu(\cdot): w\Vdash X \to w\sforces X.
\]
\end{lemma}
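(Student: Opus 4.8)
The plan is to unfold the definition of forcing directly at the atomic formula $X$. Recall that $w\Vdash X$ means that for every formula $C$,
\[
\forall w'\ge w.~ \left(\forall w''\ge w'.~ w''\sforces X \to w''\explod{C}\right) \to w'\explod{C}.
\]
I would instantiate this with the answer formula $C := X$ and the world $w' := w$ (using reflexivity of $\le$), obtaining the implication
\[
\left(\forall w''\ge w.~ w''\sforces X \to w''\explod{X}\right) \to w\explod{X}.
\]

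The key observation, already recorded in the remark preceding the statement, is that in $\calU$ the predicates $(-)\sforces X$ and $(-)\explod{X}$ are literally the same when $X$ is atomic: both assert the existence of a normal-form derivation of the sequent in question. Hence the hypothesis $\forall w''\ge w.~ w''\sforces X \to w''\explod{X}$ is inhabited by (essentially) the identity function, and feeding it to the implication above yields $w\explod{X}$, which is exactly $w\sforces X$. Extracting the computational content, $\mu$ is the monadic \emph{run} that hands the identity continuation to its argument, i.e.\ $\mu(\alpha) = \alpha\cdot(\beta\mapsto\beta)$ in the notation of Lemma~\ref{intcomp_model_monad}, with the formula $C$, the world, and the coercion left implicit as there.

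I do not expect a genuine obstacle here. The one point that needs care is that the argument crucially uses that strong forcing is defined on $X$ and there coincides with the exploding predicate; this is precisely why the lemma is restricted to atomic formulae and why it is specific to the universal model $\calU$ rather than holding in an arbitrary IK-CPS model. It is also consistent with Remark~\ref{polymorphism}, since here we only ever needed the instance $C := X$ of the polymorphic clause.
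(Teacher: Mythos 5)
Your proof is correct and is essentially the paper's own argument: the paper's entire proof reads ``By setting $C:=A$ and applying the identity function,'' which is exactly your instantiation $C:=X$ followed by feeding the identity continuation, justified by the coincidence of $(-)\sforces X$ and $(-)\explod{X}$ on atomic formulae in $\calU$. You have merely spelled out the details the paper leaves implicit.
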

\begin{proof}
By setting $C:=A$ and applying the identity function.
\end{proof}

\begin{theorem}[Completeness for $\calU$]\label{int_completeness}
  For any closed formula $A$ and closed context $\Gamma$, the following hold for $\calU$:
  \begin{align}
    \tag{$\downarrow$}\label{int_reify}\Gamma\Vdash A & \longrightarrow \{p ~|~ \Gamma\vdash p:A\} & (\text{``reify''})\\
    \tag{$\uparrow$}\label{int_reflect}\Gamma\vdash e:A & \longrightarrow \Gamma\Vdash A & (\text{``reflect''})
  \end{align}
  Moreover, the target of ($\downarrow$) is a normal term, while the source of ($\uparrow$) is a neutral term.
\end{theorem}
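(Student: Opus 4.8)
The plan is to prove the two statements simultaneously by induction on the complexity of the formula $A$, following the classical Berger--Schwichtenberg reify/reflect pattern but now threaded through the continuation monad supplied by Lemma~\ref{intcomp_model_monad}. For reify ($\downarrow$), I unfold the definition of $w\Vdash A$: from $\Gamma\Vdash A$ I must produce a proof term, so I instantiate the universal formula $C$ with $A$ itself and apply the given continuation to a suitably built element of strong forcing; concretely, $\downarrow^A$ will be defined as a function that, on input of type $\Gamma\Vdash A$, feeds it the ``toplevel'' continuation which maps $\Gamma''\sforces A$ (obtained after moving to some $\Gamma''\ge\Gamma$) to a normal derivation of $\Gamma''\vdash A$ --- and since the exploding predicate of $\calU$ at $C=A$ is exactly ``there is a normal derivation of $A$'', this yields the desired $\{p\mid\Gamma\vdash p:A\}$. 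For reflect ($\uparrow$), given a neutral $e$ with $\Gamma\vdash e:A$, I must produce $\Gamma\Vdash A$, i.e.\ an element of $\forall\Gamma'\ge\Gamma.(\forall\Gamma''\ge\Gamma'. \Gamma''\sforces A\to\Gamma''\explod{C})\to\Gamma'\explod{C}$; I weaken $e$ along $\Gamma''\ge\Gamma$ and then apply the appropriate elimination rule to feed strong forcing into the continuation.

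The inductive cases split on the shape of $A$. For $A$ atomic, both directions are essentially the identity (using the monadic $\eta$ for reflect, and for reify using the ``run'' operation $\mu$ of Lemma~\ref{intcomp_model_run} to collapse $w\Vdash X$ back to $w\sforces X$, which by definition is a normal derivation). For $A=A_1\Rightarrow A_2$: to reify, given $\Gamma\Vdash A_1\Rightarrow A_2$ I bind it (via $(\cdot)^*$) to reach strong forcing $\Gamma'\sforces A_1\Rightarrow A_2$, introduce a fresh proof variable $a$ of type $A_1$, reflect it by the IH to get $\Gamma',a:A_1\Vdash A_1$, apply strong forcing to obtain $\Gamma',a:A_1\Vdash A_2$, reify that by the IH to get a normal $r$ with $\Gamma',a:A_1\vdash r:A_2$, and return $\lambda a.r$; to reflect $e:A_1\Rightarrow A_2$, I build strong forcing by taking $w'\ge w$ and an argument in $w'\Vdash A_1$, reifying it to a normal $r$ with $w'\vdash r:A_1$ (here the freshness/closedness hypotheses matter), forming the neutral $e\,r$, and reflecting it by the IH. The cases for $\wedge$, $\forall$ are analogous and straightforward. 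The interesting cases are $\vee$ and $\exists$: for reflect at $A_1\vee A_2$, given neutral $e:A_1\vee A_2$, I produce $\Gamma\Vdash A_1\vee A_2$ by, after moving to $\Gamma'\ge\Gamma$ with continuation $\kappa:\forall\Gamma''\ge\Gamma'.\Gamma''\sforces A_1\vee A_2\to\Gamma''\explod{C}$, doing a $\mathsf{case}$-split on the weakened $e$ --- in each branch $a_i:A_i$ I reflect $a_i$ by the IH to get $\Gamma',a_i:A_i\Vdash A_i$, wrap it as strong forcing $\Gamma',a_i:A_i\sforces A_1\vee A_2$, feed it to $\kappa$ to land in $\Gamma',a_i:A_i\explod{C}$, i.e.\ a normal derivation of $C$ from $\Gamma',a_i:A_i$, and the two branches assemble (via the $\vee_E$ rule) into $\Gamma'\explod{C}$; this is exactly where the continuation, rather than delimited control, does the bookkeeping that Danvy's $\mathcal{S}/\#$ did. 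The $\exists$ case mirrors this using $\mathsf{dest}$ and $\exists_E$, with $D(-)$ constant so that the witness stays in the domain.

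The main obstacle I expect is not any single case but the careful management of the universally quantified formula $C$ and the monotonicity bookkeeping: at every use of strong forcing's definitional clauses one jumps to a larger world, and one must check that the continuation being constructed is genuinely polymorphic in $C$ (so that the same construction works when reify later instantiates $C:=A$), and that all weakenings of neutral/normal terms along $\Gamma''\ge\Gamma'$ are available --- this is the weakening property of MQC, already noted for $\calU$. A secondary subtlety is ensuring freshness of the introduced proof and individual variables in the $\Rightarrow_I$, $\forall_I$, $\vee_E$, $\exists_E$ steps, which is why the statement restricts to closed $A$ and closed $\Gamma$; in the formalisation this is handled by the de Bruijn or nominal infrastructure, but in the informal proof I would simply remark that fresh variables can always be chosen. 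Once these are in place, each case is a routine unfolding, and the simultaneous induction closes.
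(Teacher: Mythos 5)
Your proposal is correct and follows essentially the same route as the paper: a simultaneous induction on $A$, with reify obtained by instantiating the polymorphic $C$ with the formula being reified and supplying the ``toplevel'' continuation (run/$\mu$ at the base case), reflect via unit plus the elimination rules, and the crucial $\vee$/$\exists$ reflect cases handled by a $\vee_E$/$\exists_E$ derivation whose branches reflect the fresh hypothesis, inject it into strong forcing, and feed it to the continuation. The remarks on $C$-polymorphism, weakening, and freshness match the bookkeeping the paper delegates to the Coq formalisation.
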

\begin{proof} We prove simultaneously the two statements by induction on the complexity of formula $A$.

We skip writing the proof term annotations, and write just $\Gamma\vdash A$ instead of ``there exists $p$ such that $\Gamma\vdash p:A$'', in order to decrease the level of detail. The algorithm behind this proof that concentrates on proof terms is given in Section \ref{sec:correctness}.

\emph{Base case.} ($\downarrow$) is by ``run'' (Lemma \ref{intcomp_model_run}), ($\uparrow$) is by ``unit'' (Lemma \ref{intcomp_model_monad}).

\emph{Induction case for $\wedge$.} Let $\Gamma\Vdash A\wedge B$ i.e.
\[
\idcont{C}{\Gamma}{\Gamma''\Vdash A \text{ and } \Gamma''\Vdash B}.
\]
We apply this hypothesis by setting $C:=A\wedge B$ and $\Gamma':=\Gamma$, and then, given $\Gamma''\ge\Gamma$ s.t. $\Gamma''\Vdash A \text{ and } \Gamma''\Vdash B$, we have to derive $\Gamma''\vdash A\wedge B$. But, this is immediate by applying the $\wedge_I$ rule and the induction hypothesis ($\downarrow$) twice, for $A$ and for $B$.

Let $\Gamma\vdash A\wedge B$ be a neutral derivation. We prove $\Gamma\Vdash A\wedge B$ by applying unit (Lemma \ref{intcomp_model_monad}), and then applying the induction hypothesis ($\downarrow$) on $\wedge^1_I$, $\wedge^2_I$, and the hypothesis.

\emph{Induction case for $\vee$.} Let $\Gamma\Vdash A\vee B$ i.e.
\[
\idcont{C}{\Gamma}{\Gamma''\Vdash A \text{ or } \Gamma''\Vdash B}.
\]
We apply this hypothesis by setting $C:=A\vee B$ and $\Gamma':=\Gamma$, and then, given $\Gamma''\ge\Gamma$ s.t. $\Gamma''\Vdash A \text{ or } \Gamma''\Vdash B$, we have to derive $\Gamma''\vdash A\vee B$. But, this is immediate, after a case distinction, by applying the $\vee^i_I$ rule and the induction hypothesis ($\downarrow$).

We now consider the only case (besides $\uparrow^{\exists x A(x)}$ below) where using shift and reset, or our Kripke-style models, is crucial. Let $\Gamma\vdash A\vee B$ be a neutral derivation. Let a formula $C$ and $\Gamma'\ge\Gamma$ be given, and let
\[\tag{\#}
\icont{C}{\Gamma'}{\Gamma''\Vdash A \text{ or } \Gamma''\Vdash B}.
\]
We prove $\Gamma'\vdash C$ by the following derivation tree:
\begin{prooftree}
\axc{$\Gamma\vdash A\vee B$}
\uic{$\Gamma'\vdash A\vee B$}{}
\axc{$A\in A,\Gamma'$}
\uic{$A,\Gamma'\vdash A$}{\textsc{Ax}}
\uic{$A,\Gamma'\Vdash A$}{($\uparrow$)}
\uic{$A,\Gamma'\Vdash A$ or $A,\Gamma'\Vdash B$}{$\inl$}
\uic{$A,\Gamma'\vdash C$}{(\#)}
\axc{$B\in B,\Gamma'$}
\uic{$B,\Gamma'\vdash B$}{\textsc{Ax}}
\uic{$B,\Gamma'\Vdash B$}{($\uparrow$)}
\uic{$B,\Gamma'\Vdash A$ or $B,\Gamma'\Vdash B$}{$\inr$}
\uic{$B,\Gamma'\vdash C$}{(\#)}
\tic{$\Gamma'\vdash C$}{$\vee_E$}
\end{prooftree}

\emph{Induction case for $\Rightarrow$.} Let $\Gamma\Vdash A\Rightarrow B$ i.e.
\[
\idcont{C}{\Gamma}{\left(\forall \Gamma_3\ge\Gamma''.~ \Gamma_3\Vdash A \to \Gamma_3\Vdash B\right)}.
\]
We apply this hypothesis by setting $C:=A\Rightarrow B$ and $\Gamma':=\Gamma$, and then, given $\Gamma''\ge\Gamma$ s.t. 
\[\tag{\#}
\forall \Gamma_3\ge\Gamma''.~ \Gamma_3\Vdash A \to \Gamma_3\Vdash B
\]
we have to derive $\Gamma''\vdash A\Rightarrow B$. This follows by applying $(\Rightarrow_I)$, the IH for($\downarrow$), then (\#), and finally the IH for ($\uparrow$) with the \textsc{Ax} rule.

Let $\Gamma\vdash A\Rightarrow B$ be a neutral derivation. We prove $\Gamma\Vdash A\Rightarrow B$ by applying unit (Lemma \ref{intcomp_model_monad}), and then, given $\Gamma'\ge\Gamma$ and $\Gamma'\Vdash A$, we have to show that $\Gamma'\Vdash B$. This is done by applying the IH for ($\uparrow$) on the $(\Rightarrow_E)$ rule, with the IH for ($\downarrow$) applied to $\Gamma'\Vdash A$. 

\emph{Induction case for $\forall$.} We recall that the domain function $D(-)$ is constant in the universal model $\calU$. Let $\Gamma\Vdash \forall x A(x)$ i.e.
\[
\idcont{C}{\Gamma}{\left(\forall \Gamma_3\ge\Gamma''.~ \forall t\in D.~ \Gamma_3\Vdash A(t)\right)}.
\]
We apply this hypothesis by setting $C:=\forall x A(x)$ and $\Gamma':=\Gamma$, and then, given $\Gamma''\ge\Gamma$ s.t. 
\[\tag{\#}
\forall \Gamma_3\ge\Gamma''.~\forall t\in D.~ \Gamma_3\Vdash A(t)
\]
we have to derive $\Gamma''\vdash \forall x A(x)$. This follows by applying $(\forall_I)$, the IH for($\downarrow$), and then (\#).

Let $\Gamma\vdash \forall x A(x)$ be a neutral derivation. We prove $\Gamma\Vdash \forall x A(x)$ by applying unit (Lemma \ref{intcomp_model_monad}), and then, given $\Gamma'\ge\Gamma$ and $t\in D$, we have to show that $\Gamma'\Vdash A(t)$. This is done by applying the IH for ($\uparrow$) on the $(\forall_E)$ rule and the hypothesis $\Gamma\vdash \forall x A(x)$.

\emph{Induction case for $\exists$.} Let $\Gamma\Vdash \exists x A(x)$ i.e.
\[
\idcont{C}{\Gamma}{\left(\exists t\in D.~ \Gamma''\Vdash A(t)\right)}.
\]
We apply this hypothesis by setting $C:=\exists x A(x)$ and $\Gamma':=\Gamma$, and then, given $\Gamma''\ge\Gamma$ s.t. $\exists t\in D.~ \Gamma''\Vdash A(t)$, we have to derive $\Gamma''\vdash \exists x A(x)$. This follows by applying $(\exists_I)$ with $t\in D$, and the IH for($\downarrow$).

Let $\Gamma\vdash \exists x A(x)$ be a neutral derivation. Let a formula $C$ and $\Gamma'\ge\Gamma$ be given, and let
\[\tag{\#}
\icont{C}{\Gamma'}{\exists t\in D. \Gamma''\Vdash A(t)}.
\]
We prove $\Gamma'\vdash C$ by the following derivation tree:
\begin{prooftree}
\axc{$\Gamma\vdash \exists x A(x)$}
\uic{$\Gamma'\vdash \exists x A(x)$}{}
\axc{$A(x)\in A(x),\Gamma'$}
\uic{$A(x),\Gamma'\vdash A(x)$}{\textsc{Ax}}
\uic{$A(x),\Gamma'\Vdash A(x)$}{($\uparrow$)}
\uic{$A(x),\Gamma'\vdash C$}{(\#)}
\axc{$x$-fresh}
\tic{$\Gamma'\vdash C$}{$\exists_E$}
\end{prooftree}

\emph{The result of reification ``$\downarrow$'' is in normal form.} By inspection of the proof.\qed
\end{proof}

\section{Normalisation by Evaluation in IK-CPS Models}\label{sec:correctness}

In this section we give the algorithm that we manually extracted from the Coq formalisation, for the restriction to the interesting propositional fragment that involves implication and disjunction. The algorithm extracted automatically by Coq contains too many details to be instructive. 

The following evaluation function for $\lambda^{\to\vee}$-terms is behind the proof of Theorem~\ref{int_soundness}:
\begin{align*}
\llbracket \Gamma\vdash p:A \rrbracket_{w\Vdash\Gamma} &:  w\Vdash A\\
\\
\llbracket a \rrbracket_\rho &:= \rho(a) \\
\llbracket\lambda a.p\rrbracket_\rho &:= \kappa\mapsto\kappa\cdot\left(\alpha\mapsto\llbracket p\rrbracket_{\rho,a\mapsto\alpha}\right) = \eta\cdot\left(\alpha\mapsto\llbracket p\rrbracket_{\rho,a\mapsto\alpha}\right)\\
\llbracket p q\rrbracket_\rho &:= \kappa\mapsto\llbracket p\rrbracket_\rho\cdot\left(\phi\mapsto\phi\cdot\llbracket q\rrbracket_\rho\cdot\kappa\right)\\
\llbracket \iota_1 p\rrbracket_\rho &:= \kappa\mapsto\kappa\cdot\left(\inl\cdot\llbracket p\rrbracket_\rho\right) = \eta\cdot\left(\inl\cdot\llbracket p\rrbracket_\rho\right)\\
\llbracket \iota_2 p\rrbracket_\rho &:= \kappa\mapsto\kappa\cdot\left(\inr\cdot\llbracket p\rrbracket_\rho\right) = \eta\cdot\left(\inr\cdot\llbracket p\rrbracket_\rho\right)\\
\llbracket\caseof{p}{a_1}{q_1}{a_2}{q_2}\rrbracket_\rho &:= \kappa\mapsto\llbracket p\rrbracket_\rho\cdot\left(\gamma\mapsto
\left\{
  \begin{array}{ll}
    \llbracket q_1\rrbracket_{\rho,a_1\mapsto\alpha}\cdot\kappa & \text{ if } \gamma=\inl\cdot\alpha\\
    \llbracket q_2\rrbracket_{\rho,a_2\mapsto\beta}\cdot\kappa & \text{ if } \gamma=\inr\cdot\beta
  \end{array}
\right.\right)
\end{align*}

The following is the algorithm behind Theorem~\ref{int_completeness}:
\begin{align*}
\downarrow_\Gamma^A &: \Gamma\Vdash A \to \left\{p\in\Lambda\text{-nf } ~|~ \Gamma\vdash p:A\right\}\\
\uparrow_\Gamma^A &: \left\{e\in\Lambda\text{-ne } ~|~ \Gamma\vdash e:A\right\} \to \Gamma\Vdash A\\
\\
\downarrow_\Gamma^X &:= \alpha\mapsto\mu\cdot\alpha & X\text{-atomic}\\
\uparrow_\Gamma^X &:= e\mapsto \eta\cdot e & X\text{-atomic}\\
\downarrow_\Gamma^{A\Rightarrow B} &:= \eta\cdot\left(\phi\mapsto \lambda a.\downarrow_{\Gamma,a:A}^B\left(\phi\cdot\uparrow_{\Gamma,a:A}^A a\right)\right) & a\text{-fresh}\\
\uparrow_\Gamma^{A\Rightarrow B} &:= e\mapsto \eta\cdot\left(\alpha\mapsto\uparrow_\Gamma^B\left(e\left(\downarrow_\Gamma^A \alpha\right)\right)\right)\\
\downarrow_\Gamma^{A\vee B} &:= \eta\cdot\left(\gamma\mapsto
\left\{
  \begin{array}{ll}
    \iota_1 \downarrow_\Gamma^A\alpha &\text{ if } \gamma=\inl\cdot\alpha\\
    \iota_2 \downarrow_\Gamma^B\beta &\text{ if } \gamma=\inr\cdot\beta
  \end{array}
\right.
\right)\\
\uparrow_\Gamma^{A\vee B} &:= e\mapsto\kappa\mapsto\caseof{e}{a_1}{\kappa\cdot\left(\inl\cdot\uparrow_{\Gamma,a_1:A}^A a_1\right)}{a_2}{\kappa\cdot\left(\inr\cdot\uparrow_{\Gamma,a_2:B}^B a_2\right)} & a_i\text{-fresh}
\end{align*}

\section{Variants and Relation to Kripke Models}\label{sec:variants}

\subsection{``Call-by-value'' Models} Defining forcing on composite formulae in Definition~\ref{intcomp_model_def} proceeds analogously to defining the call-by-name CPS translation \cite{Plotkin1975}, or Kolmogorov's double-negation translation \cite{TroelstraVD1,MurthyThesis}. A definition analogous to the ``call-by-value''  CPS translation \cite{Plotkin1975} is also possible, by defining (non-strong) forcing by:

\begin{itemize}
\item $w\svd A\wedge B$ if $w\svd A$ and $w\svd B$;
\item $w\svd A\vee B$ if $w\svd A$ or $w\svd B$;
\item $w\svd A\Rightarrow B$ if for all $w'\ge w$, $w\svd A$ implies $w\Vdash B$;
\item $w\svd \forall x. A(x)$ if for all $w'\ge w$ and all $t\in D(w')$, $w'\Vdash A(t)$;
\item $w\svd \exists x. A(x)$ if $w\svd A(t)$ for some $t\in D(w)$.
\end{itemize}

One can prove this variant of IK-CPS models sound and complete, similarly to Section~\ref{sec:completeness}, except for two differences. Firstly, in the statement of Soundness, one needs to put $w\svd \Gamma$ in place of $w\Vdash\Gamma$. Secondly, due to the first difference, the composition of soundness of completeness that gives normalisation works for \emph{closed} terms only.

\subsection{Classical Models}

In \cite{intcomp_formalisation,IlikThesis,IlikLH2010}, we presented the following notion of model which is complete for \emph{classical} predicate logic and represents an NBE algorithm for it.

\begin{definition}\label{cbv_clkr} A \emph{Classical Kripke-CPS model (CK-CPS)}, is given by:
  \begin{itemize}
  \item a preorder $(K, \le)$ of \emph{possible worlds};
  \item a \fbox{unary} relation on worlds $(-) \exploding$ labelling a world as \emph{exploding};
  \item a binary relation $(-) \svd (-)$ of \emph{strong forcing} between worlds and atomic formulae, such that
    \[
    \text{ for all } w'\ge w, w\svd X \to w'\svd X,
    \]
  \item and a domain of quantification $D(w)$ for each world $w$, such that
    \[
    \text{ for all } w'\ge w, D(w)\subseteq D(w').
    \]
  \end{itemize}
  The relation $(-) \svd (-)$ of \emph{strong forcing} is \emph{extended from atomic to composite formulae} inductively and by simultaneously defining \fbox{two} new relations, refutation and (non-strong) forcing:
  \begin{itemize}
  \item[$\star$] A formula $A$ is \emph{refuted} in the world $w$ (notation $w:A\Vdash$) if any world $w'\ge w$, which strongly forces $A$, is exploding;
  \item[$\star$] A formula $A$ is \emph{forced} in the world $w$ (notation $w\Vdash A$) if any world $w'\ge w$, which refutes $A$, is exploding;
  \item $w\svd A\wedge B$ if $w\Vdash A$ and $w\Vdash B$;
  \item $w\svd A\vee B$ if $w\Vdash A$ or $w\Vdash B$;
  \item $w\svd A\Rightarrow B$ if for all $w'\ge w$, $w\Vdash A$ implies $w\Vdash B$;
  \item $w\svd \forall x. A(x)$ if for all $w'\ge w$ and all $t\in D(w')$, $w'\Vdash A(t)$;
  \item $w\svd \exists x. A(x)$ if $w\Vdash A(t)$ for some $t\in D(w)$.
  \end{itemize}
\end{definition}

The differences between Definition~\ref{intcomp_model_def} and Definition~\ref{cbv_clkr} are marked with boxes. We can also present CK-CPS using binary exploding nodes, by defining $w\svd\bot := \forall C. w\explod{C}$. Then, we get the following statement of forcing in CK-CPS,
\[
\forall w'\ge w.~ \left(\forall w''\ge w'.~ w''\sforces A \to \forall I. w''\explod{I}\right) \to \forall O. w'\explod{O},
\]
versus forcing in IK-CPS,
\[
\forall C.~ \forall w'\ge w.~ \left(\forall w''\ge w'.~ w''\sforces A \to w''\explod{C}\right) \to w'\explod{C}.
\]

The difference between forcing in the intuitionistic and classical models is, then, that: 1) the dependency on $C$ is necessary in the intuitionistic case, while it is optional in the classical case; 2) the continuation (the internal implication) in classical forcing is allowed to change the parameter $C$ upon application, whereas in intuitionistic forcing the parameter is not local to the continuation, but to the continuation of the continuation.

At this point we also remark that the use of dependent types to handle the parameter $C$ is determined by the fact that we formalise our definitions in Intuitionistic Type Theory. Otherwise, the quantification $\forall C. \cdots$ is quantification over first-order individuals, for example natural numbers.

\subsection{Kripke Models}

Let $A(n)$ be an arbitrary first-order formula and let $X(n,m)$ be a $\Sigma^0_1$-formula. Denote the following arithmetic schema by (D-DNS$^+$) for ``dependent Double-negation Shift schema, strengthened''.
\begin{prooftree}\label{ddnsplus}
  \axc{$\forall m.~ \forall n_1\ge n.~ \left(\forall n_2\ge n_1.~ A(n_2) \to X(n_2,m)\right) \to X(n_1,m)$}
  \uic{$A(n)$}{D-DNS$^+$}
\end{prooftree}

\begin{proposition} Let $\mathcal{K}=(K,\le,D,\vDash,\vDash_\bot)$ be any structure such that $\vDash$ denotes forcing in the standard Kripke model arising from $\mathcal{K}$, and $\Vdash$ denotes (non-strong) forcing in the IK-CPS  model arising from the same $\mathcal{K}$.

Then, in the presence of (D-DNS$^+$) at meta-level, for all formula $A$, and any $w\in K$,
\[
w\vDash A \longleftrightarrow w\Vdash A.
\]
\end{proposition}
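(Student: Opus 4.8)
The plan is to prove the two implications of the biconditional by simultaneous induction on the structure of the formula $A$, since strong forcing $\svd$ in the IK-CPS model refers back to non-strong forcing $\Vdash$ on subformulae, and ordinary Kripke forcing $\vDash$ is defined on subformulae in parallel. For the base case ($A$ atomic, including $\bot$), note that in both models forcing on atomic formulae is governed by the same underlying data in $\calK$: for a genuine atom $X$ this is the relation $\svd$ from $\calK$, and $w \vDash X \leftrightarrow w \svd X$ should be immediate, so one must check $w \svd X \leftrightarrow w \Vdash X$, i.e.\ that $\eta$ (unit) and $\mu$ (run) are mutually inverse on atoms — but $\mu$ is only available on the universal model, so here we instead argue directly: $w \Vdash X$ unfolds to $\forall C.\ \forall w'\ge w.\ (\forall w''\ge w'.\ w''\svd X \to w''\explod{C})\to w'\explod{C}$, which is exactly a double-negation of $w\svd X$ relative to the exploding predicate, and this is where the first instance of (D-DNS$^+$) (in fact just ordinary double-negation elimination at the meta-level, the $m$-free, $A$-atomic special case) is used to collapse it to $w\svd X$; the reverse direction is $\eta$.

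For the composite cases I would push the induction hypothesis through each connective clause. For $\wedge$, $\vee$, $\Rightarrow$, $\forall$, $\exists$, the strong-forcing clauses of Definition~\ref{intcomp_model_def} are \emph{syntactically identical} to the forcing clauses of Definition~\ref{kripke_model} except that $\Vdash$ appears in the IK-CPS version where $\vDash$ appears in the Kripke version; so by the IH, $w \svd^{\text{IK-CPS}} A \leftrightarrow w \vDash A$ for composite $A$. It then remains to bridge $w \svd A \leftrightarrow w \Vdash A$ in the IK-CPS model for composite $A$ — again one direction is $\eta$ (unit, Lemma~\ref{intcomp_model_monad}), and the other direction is where the full strength of (D-DNS$^+$) enters: unfolding $w \Vdash A$ gives a nested double-negation (relative to $\explod{C}$, uniformly in $C$) of $w \svd A$, and one must eliminate it. The move is to instantiate (D-DNS$^+$) with the predicate $A(n)$ taken to be ``$n \svd A$'' (thinking of worlds as the first-order individuals $n$, with $\le$ their ordering) and $X(n,m)$ taken to be ``$n \explod{C_m}$'' where $m$ enumerates formulae $C$ — so that the $\Sigma^0_1$-ness and the parametrisation in $m$ of the schema match, respectively, the atomic/positive nature of the exploding predicate in the intended reading and the ``for all formula $C$'' quantifier in the definition of forcing. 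After this instantiation, the premise of (D-DNS$^+$) is precisely (a reindexing of) the unfolded statement of $w \Vdash A$, and its conclusion is $w \svd A$.

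The main obstacle I expect is making the application of (D-DNS$^+$) rigorous, specifically matching the shape of the schema to the shape of the forcing definition. Two points need care. First, the quantifier ``$\forall m$'' in (D-DNS$^+$) sits \emph{outside} the implication, i.e.\ the schema gives $A(n)$ from a premise that is uniform in $m$; but in the definition of $w\Vdash A$ the quantifier ``$\forall C$'' also sits outside, over the whole implication, so the alignment is in fact exact — one just has to confirm that the ``$w''\explod{C}$'' occurring in the antecedent and the ``$w'\explod{C}$'' in the consequent use the \emph{same} $C$, which is indeed how Definition~\ref{intcomp_model_def} is written. Second, one must check that ``$n \explod{C_m}$'' legitimately plays the role of a $\Sigma^0_1$ formula $X(n,m)$ in whatever arithmetised setting the meta-theory fixes (this is the sense in which the proposition is stated ``at meta-level''); for the universal model $\calU$ the exploding predicate is ``there is a normal derivation of $\Gamma \vdash C$'', which is indeed $\Sigma^0_1$, so the hypothesis is appropriate, and for an abstract $\calK$ one simply takes this as the standing assumption on $\vDash_\bot$. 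Modulo these bookkeeping matches, the induction is routine: every connective case reduces, via the IH and the $\eta$/($\svd\!\to\!\Vdash$)-by-(D-DNS$^+$) equivalence, to the corresponding Kripke clause.
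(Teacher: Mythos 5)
Your proposal is correct and follows essentially the same route as the paper: induction on $A$, with unit ($\eta$) giving $w\svd A\to w\Vdash A$, and D-DNS$^+$ instantiated with $A(n):=\text{``}n\svd A\text{''}$ and $X(n,m):=\text{``}n\explod{C_m}\text{''}$ collapsing $w\Vdash A$ back to $w\svd A$ — exactly the instance the paper displays for the disjunction case. Two small caveats: the paper invokes the schema only at $\vee$ and $\exists$, since for $\wedge$, $\Rightarrow$, $\forall$ the collapse is derivable without it by threading the continuation through the induction hypothesis (their strong-forcing clauses end in $\Vdash$-statements, which are themselves CPS-shaped); and your parenthetical that the atomic case is ``ordinary double-negation elimination at the meta-level'' is not accurate — $\explod{C}$ is an arbitrary relation rather than absurdity (e.g.\ it could be everywhere true), so the atomic collapse genuinely requires the schema (it is the Markov-style special case Kreisel's observation points to), though since you also frame it as a D-DNS$^+$ instance the argument still goes through.
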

\begin{proof}
The proof is by induction on $A$, using (D-DNS$^+$) to prove,
\begin{prooftree}
  \axc{$\forall C.~ \forall w_1\ge w.~ \left(\forall w_2\ge w_1.~ \left(w_2\Vdash A \text{ or } w_2\Vdash B\right) \to w_2\explod{C})\right) \to w_1\explod{C}$}
  \uic{$w\Vdash A \text{ or } w\Vdash B$}{,}
\end{prooftree}
needed in the case for disjunction, and similarly for the existential quantifier.
\end{proof}

\begin{corollary} Completeness of full intuitionistic predicate logic with respect to standard Kripke models is provable constructively, in the presence of D-DNS$^+$.
\end{corollary}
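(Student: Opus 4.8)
The plan is to obtain the corollary by combining the constructive completeness of MQC for IK-CPS models (Theorems~\ref{int_soundness} and~\ref{int_completeness}) with the Proposition just proved, the latter being the only ingredient that consumes D-DNS$^+$. From Theorem~\ref{int_completeness} one first reads off the usual corollary-shaped statement: reflecting the axioms of $\Gamma$ gives $\Gamma\Vdash\Gamma$ at the world $\Gamma$ of $\calU$, so whenever $A$ is IK-CPS-valid over $\Gamma$ one gets $\Gamma\Vdash A$ and hence, by $(\downarrow)$, $\Gamma\vdash A$. Soundness of the standard (Veldman-style) Kripke semantics is the routine induction of the first Soundness theorem. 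Thus, modulo a transfer between the two semantics, completeness for standard Kripke models is in hand; and the Proposition supplies exactly such a transfer, $w\vDash B\leftrightarrow w\Vdash B$ for every formula $B$ and every world of every Kripke structure $\mathcal K$, provided D-DNS$^+$ holds at the meta-level.

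The step requiring care is that the universal model $\calU$ of Theorem~\ref{int_completeness} carries a \emph{formula-indexed} exploding predicate, $\Gamma\explod{C}$ iff $\Gamma\vdash^{\text{nf}}C$, whereas a Kripke structure $\mathcal K$ in the sense of the Proposition carries only a unary $\vDash_\bot$; so $\calU$ is not literally ``the IK-CPS model arising from a $\mathcal K$'', and the Proposition cannot be plugged into it blindly. I would instead work with the syntactic universal \emph{standard} Kripke structure $\mathcal K_0$ -- worlds the contexts of MQC, $\le$ the inclusion, $D$ constant, $\Gamma\vDash X$ iff $\Gamma\vdash^{\text{nf}}X$ for atomic $X$, and $\Gamma\vDash_\bot$ iff $\Gamma\vdash^{\text{nf}}\bot$ -- and the IK-CPS model $\mathcal N_0$ built on the same underlying data, for which the Proposition yields $\Gamma\vDash B\leftrightarrow\Gamma\Vdash B$. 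It then suffices to re-run the reify/reflect induction of Theorem~\ref{int_completeness} with $\mathcal N_0$ in place of $\calU$: establish $\Gamma\Vdash\Gamma$ at the world $\Gamma$ by reflecting the axioms, and $\Gamma\Vdash A\to\Gamma\vdash A$ by $(\downarrow)$; then, given that $A$ is forced at every world forcing $\Gamma$ in every standard Kripke model, instantiate at $\mathcal K_0$ and the world $\Gamma$ (legitimate because $\Gamma\vDash\Gamma\leftrightarrow\Gamma\Vdash\Gamma$ by the Proposition), pull back to $\Gamma\Vdash A$ and conclude $\Gamma\vdash A$. Monotonicity (Lemma~\ref{intcomp_model_monotone}) and the $\wedge,\Rightarrow,\forall$ cases of the induction carry over verbatim.

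The main obstacle is the atomic ``run'' clause (Lemma~\ref{intcomp_model_run}) together with the disjunction and existential reflection cases -- the ones the proof of Theorem~\ref{int_completeness} singles out as the only places where the Kripke-CPS device is crucial -- because in $\mathcal N_0$ the exploding node is no longer formula-indexed. ``Run'' now asks one to recover $\Gamma\vdash^{\text{nf}}X$ from its double negation taken relative to the falsity $\vdash^{\text{nf}}\bot$, and the $\vee/\exists$ clauses ask one to recover $\Gamma''\Vdash A$ (respectively the witnessed instance $\Gamma''\Vdash A(t)$) from the corresponding iterated-negation statement ranging over the contexts extending $\Gamma'$. This is precisely the content that Kreisel's theorem, recalled in the introduction, predicts must amount to Markov's Principle and to a special case of D-DNS$^+$. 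The expected resolution is to feed D-DNS$^+$ into exactly these clauses, with $A(n)$ and the $\Sigma^0_1$-formula $X(n,m)$ instantiated by the relevant derivability predicates on the contexts $\Gamma_n$, just as in the proof of the Proposition, and to discharge ``run'' from Markov's Principle, which D-DNS$^+$ entails. Everything else -- soundness, monotonicity, and the negative connectives -- is bookkeeping.
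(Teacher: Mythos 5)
Your first paragraph is exactly the paper's (implicit) argument, and it is all that is needed: reflect the axioms to get $\Gamma\Vdash\Gamma$ in $\calU$, transfer to $\Gamma\vDash\Gamma$ by the Proposition, instantiate the validity hypothesis over standard Kripke models to get $\Gamma\vDash A$, transfer back to $\Gamma\Vdash A$, and reify. The detour in your second and third paragraphs rests on a misreading of the Proposition. Nothing in it requires the exploding predicate of the structure $\mathcal{K}$ to be unary: the standard Kripke model arising from $\mathcal{K}$ does not consult the exploding predicate at all (MQC is minimal logic, so $\bot$ is at worst an atom), while the IK-CPS model arising from the same data uses the binary $\explod{C}$; and the parameter $m$ in D-DNS$^+$, with $X(n,m)$ genuinely depending on $m$, is there precisely to accommodate the formula index $C$ of the binary exploding node. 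So the Proposition applies verbatim to the underlying data of $\calU$, and no second model $\mathcal{N}_0$ is needed.

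Worse, the repair you sketch for $\mathcal{N}_0$ would not go through. With $\Gamma\explod{C}$ frozen to $\Gamma\vdash^\text{nf}\bot$, the instantiation $C:=\text{(goal formula)}$ --- which is the engine of \emph{every} reify case of Theorem~\ref{int_completeness}, not just of run, $\vee$ and $\exists$ --- becomes unavailable: for instance, in the $\wedge$ case one needs $\Gamma''\explod{A\wedge B}$ to \emph{be} the goal $\Gamma''\vdash A\wedge B$, whereas in $\mathcal{N}_0$ it is $\Gamma''\vdash^\text{nf}\bot$, from which $\Gamma''\vdash A\wedge B$ does not follow in minimal logic (no ex falso). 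Markov's Principle does not rescue this: it eliminates genuine double negations of $\Sigma^0_1$ statements, whereas here the ``negation'' is relative to the fixed proposition $\vdash^\text{nf}\bot$; and D-DNS$^+$ commutes a quantifier past such a wrapper but never strips the wrapper off when its target formula is held constant. The formula-indexed exploding node is not an obstacle to be engineered around --- it is the Friedman-translation device that makes the completeness proof constructive, and the corollary should keep it.
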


\begin{remark} It is the other direction of this implication that Kreisel proved, for a specialisation of D-DNS$^+$. (Section~\ref{sec:nbe}) To investigate more precisely whether D-DNS$^+$ captures exactly constructive provability of completeness for Kripke models remains future work.
\end{remark}

\section{Conclusion}\label{sec:conclusion}

We emphasised that our algorithm is $\beta$-NBE, because were we able to identify $\beta\eta$-equal terms of $\lambda^{\to\vee}$ through our NBE function, we would have solved the problem of the existence of canonical $\eta$-long normal form for $\lambda^{\to\vee}$. However, as shown by \cite{FioreCB2006}, due to the connection with Tarski's High School Algebra Problem \cite{BurrisL1993,Wilkie}, the notion of such a normal form is not finitely axiomatisable. If one looks at examples of $\lambda^{\to\vee}$-terms which are $\beta\eta$-equal but are not normalised to the same term by Danvy's (and our) algorithm, one can see that in the Coq type theory these terms are interpreted as denotations that involve commutative cuts.

In recent unpublished work \cite{Danvy2008}, Danvy also developed a version of his NBE algorithm directly in CPS, without using delimited control operators.

In \cite{Barral2009}, Barral gives a program for NBE of $\lambda$-calculus with sums by just using the exceptions mechanism of a programming language, which is something \textit{a priori} strictly weaker than using delimited control operators.

In \cite{AltenkirchDHS}, Altenkirch, Dybjer, Hofmann, and Scott, give a topos theoretic proof of NBE for a typed $\lambda$-calculus with sums, by constructing a sheaf model. The connection between sheaves and Beth semantics\footnote{We remark that, for the fragment $\{\Rightarrow,\forall,\wedge\}$, NBE can also be seen as completeness for \emph{Beth} semantics, since forcing in Beth and Kripke models is the same thing on that fragment.}
 is well known. While the proof is constructive, due to their use of topos theory, we were unable to extract an algorithm from it.

In \cite{MacedonioS}, Macedonio and Sambin present a notion of model for extensions of Basic logic (a sub-structural logic more primitive than Linear logic), which, for intuitionistic logic, appears to be related to our notion of model. However, they demand that their set of worlds $K$ be saturated, while we do not, and we can hence also work with finite models. 

In \cite{Filinski2001}, Filinski proves the correctness of an NBE algorithm for Moggi's computational $\lambda$-calculus, including sums. We found out about Filinski's paper right before finishing our own. He also evaluates the input terms in a domain based on continuations.

\section*{Acknowledgements} To Hugo Herbelin for inspiring discussions and, in particular, for suggesting to try polymorphism, viz. Remark \ref{polymorphism}. To Olivier Danvy for suggesting reference \cite{Filinski2001}.

\bibliographystyle{plain}
\bibliography{intcomp}

\def\cprime{$'$}
\begin{thebibliography}{10}

\bibitem{AltenkirchDHS}
Thorsten Altenkirch, Peter Dybjer, Martin Hofmann, and Philip~J. Scott.
\newblock Normalization by evaluation for typed lambda calculus with
  coproducts.
\newblock In {\em LICS}, pages 303--310, 2001.

\bibitem{Barral2009}
Freiric Barral.
\newblock Exceptional {NbE} for sums.
\newblock In Olivier Danvy, editor, {\em Informal proceedings of the 2009
  Workshop on Normalization by Evaluation, August 15th 2009, Los Angeles,
  California}, pages 21--30, 2009.

\bibitem{BergerO2005}
U.~Berger and P.~Oliva.
\newblock Modified bar recursion and classical dependent choice.
\newblock In M.~Baaz, S.D. Friedman, and J.~Kraijcek, editors, {\em Logic
  Colloquium '01, Proceedings of the Annual European Summer Meeting of the
  Association for Symbolic Logic, held in Vienna, Austria, August 6 - 11,
  2001}, volume~20 of {\em Lecture Notes in Logic}, pages 89--107. Springer,
  2005.

\bibitem{BergerS1991}
Ulrich Berger and Helmut Schwichtenberg.
\newblock An inverse of the evaluation functional for typed lambda-calculus.
\newblock In {\em LICS}, pages 203--211. IEEE Computer Society, 1991.

\bibitem{BurrisL1993}
Stanley Burris and Simon Lee.
\newblock Tarski's high school identities.
\newblock {\em The American Mathematical Monthly}, 100(3):231--236, 1993.

\bibitem{CCoquand1993}
Catarina Coquand.
\newblock From semantics to rules: A machine assisted analysis.
\newblock In {\em CSL '93}, volume 832 of {\em Lecture Notes in Computer
  Science}, pages 91--105. Springer, 1993.

\bibitem{CCoquand2002}
Catarina Coquand.
\newblock A formalised proof of the soundness and completeness of a simply
  typed lambda-calculus with explicit substitutions.
\newblock {\em Higher Order Symbol. Comput.}, 15(1):57--90, 2002.

\bibitem{Danvy1996}
Olivier Danvy.
\newblock Type-directed partial evaluation.
\newblock In {\em POPL}, pages 242--257, 1996.

\bibitem{Danvy2008}
Olivier Danvy.
\newblock A call-by-name normalization function for the simply typed
  lambda-calculus with sums and products.
\newblock manuscript, 2008.

\bibitem{DanvyF1989}
Olivier Danvy and Andrzej Filinski.
\newblock A functional abstraction of typed contexts.
\newblock Technical report, Computer Science Department, University of
  Copenhagen, 1989.
\newblock DIKU Rapport 89/12.

\bibitem{DanvyF1990}
Olivier Danvy and Andrzej Filinski.
\newblock Abstracting control.
\newblock In {\em LISP and Functional Programming}, pages 151--160, 1990.

\bibitem{FilinskiThesis}
Andrzej Filinski.
\newblock {\em Controlling Effects}.
\newblock PhD thesis, School of Computer Science, Carnegie Mellon University,
  1996.
\newblock Technical Report CMU-CS-96-119 (144pp.).

\bibitem{Filinski2001}
Andrzej Filinski.
\newblock Normalization by evaluation for the computational lambda-calculus.
\newblock In Samson Abramsky, editor, {\em Typed Lambda Calculi and
  Applications}, volume 2044 of {\em Lecture Notes in Computer Science}, pages
  151--165. Springer Berlin / Heidelberg, 2001.

\bibitem{FioreCB2006}
Marcelo~P. Fiore, Roberto~Di Cosmo, and Vincent Balat.
\newblock Remarks on isomorphisms in typed lambda calculi with empty and sum
  types.
\newblock {\em Ann. Pure Appl. Logic}, 141(1-2):35--50, 2006.

\bibitem{HerbelinLee2009}
Hugo Herbelin and Gyesik Lee.
\newblock Forcing-based cut-elimination for {G}entzen-style intuitionistic
  sequent calculus.
\newblock In Hiroakira Ono, Makoto Kanazawa, and Ruy J. G.~B. {de Queiroz},
  editors, {\em WoLLIC}, volume 5514 of {\em Lecture Notes in Computer
  Science}, pages 209--217. Springer, 2009.

\bibitem{intcomp_formalisation}
Danko Ilik.
\newblock Formalisation of completeness for {K}ripke-{CPS} models, 2009.
\newblock
  \url{http://www.lix.polytechnique.fr/~danko/code/kripke_completeness/}.

\bibitem{IlikThesis}
Danko Ilik.
\newblock {\em Constructive Completeness Proofs and Delimited Control}.
\newblock PhD thesis, École Polytechnique, October 2010.

\bibitem{IlikLH2010}
Danko Ilik, Gyesik Lee, and Hugo Herbelin.
\newblock Kripke models for classical logic.
\newblock {\em Annals of Pure and Applied Logic}, 161(11):1367 -- 1378, 2010.
\newblock Special Issue: Classical Logic and Computation (2008).

\bibitem{Kreisel1962}
Georg Kreisel.
\newblock On weak completeness of intuitionistic predicate logic.
\newblock {\em J. Symb. Log.}, 27(2):139--158, 1962.

\bibitem{Kripke1963}
Saul Kripke.
\newblock Semantical considerations on modal and intuitionistic logic.
\newblock {\em Acta Philos. Fennica}, 16:83--94, 1963.

\bibitem{MacedonioS}
Damiano Macedonio and Giovanni Sambin.
\newblock From meta-level to semantics via reflection: a model for basic logic
  and its extensions.
\newblock available from the authors.

\bibitem{MurthyThesis}
Chetan Murthy.
\newblock {\em Extracting Classical Content from Classical Proofs}.
\newblock PhD thesis, Department of Computer Science, Cornell University, 1990.

\bibitem{Plotkin1975}
G.~D. Plotkin.
\newblock Call-by-name, call-by-value and the [lambda]-calculus.
\newblock {\em Theoretical Computer Science}, 1(2):125--159, 1975.

\bibitem{Spector}
Clifford Spector.
\newblock Provably recursive functionals of analysis: a consistency proof of
  analysis by an extension of principles formulated in current intuitionistic
  mathematics.
\newblock In {\em Proc. {S}ympos. {P}ure {M}ath., {V}ol. {V}}, pages 1--27.
  American Mathematical Society, Providence, R.I., 1962.

\bibitem{TroelstraVD1}
A.~S. Troelstra and D.~{van Dalen}.
\newblock {\em Constructivism in mathematics. {V}ol. {I}}, volume 121 of {\em
  Studies in Logic and the Foundations of Mathematics}.
\newblock North-Holland Publishing Co., Amsterdam, 1988.
\newblock An introduction.

\bibitem{Veldman1976}
Wim Veldman.
\newblock An intuitionistic completeness theorem for intuitionistic predicate
  logic.
\newblock {\em J. Symb. Log.}, 41(1):159--166, 1976.

\bibitem{Wilkie}
A.~J. Wilkie.
\newblock On exponentiation - a solution to {T}arski's high school algebra
  problem.
\newblock Technical report, Mathematical Institute, Oxford, UK, 2001.

\end{thebibliography}

\end{document}